\newtheorem{definition}{Definition}[section]
\newtheorem{example}{Example}[section]
\newtheorem{theorem}{Theorem}[section]
\title{Non Existence of Manifold Structures on Topological constructions}
\author{Mohammad Al Attar }
\date{March 2022}
\begin{document}

\maketitle

Abstract: We show the non existence of manifold structure on certain topological constructions. The main results of this paper are:  (1) The wedge of manifold with itself is not a manifold. (2) The cone of a manifold is not a manifold.  The main tools for the main results are homological algebra and algebraic topology.

\newpage

Topological manifolds are spaces that resemble euclidean space locally. That is, at a sufficiently small region, the manifold looks like (a region of) $\mathbb{R}^n$. For instance, a line is a one dimensional manifold. A surface is a two dimensional manifold and so on. A piece of paper is a real world model of a two dimensional manifold. The earth, locally, looks like a 2 dimensional region, and is in some sense locally flat. However, to discuss the notion of flatness from the point of view of curvature, one requires additional structure on a topological manifold. In particular, a topological manifold is not equipped with data the takes into account curvature. However, manifolds that take into account curvature are called $\textbf{Riemannian manifolds}$, which are, informally, manifolds, such that the notions of differentiation and integration are well defined;  in other words are \textbf{smooth manifolds}, with the additional structure that you can define the notions of length on a linear approximation of the manifold at a given region of any point. It is well known that there are only two 'reasonable'  1-dimensional manifolds from a topological sense: The real line and the circle. It is clear that these two spaces are topologically distinct. For 2-manifolds, the situation gets a bit more complicated. In particular 'reasonable' 2-dimensional manifolds are characterised by the number of holes they have. The number of holes is called the genus. For instance, the two sphere has no holes and therefore has genus 0. However, one may obtain a genus one surface by attaching a handle to the two sphere; obtaining a torus. This process is an inductive process, yielding an orientable manifold at each stage. However, one may also obtain non orientable manifolds by attaching the boundary of the mobius band. In higher dimensions, the situation gets much more complicated and is an active area of research. In this paper, we mainly discuss topological manifolds of arbitrary dimensions and show that certain topological constructions do not yield a topological manifold structure. The background assumes a solid background in algebraic topology. Hatcher's book [1] and [3] are very good sources for algebraic topology.  $[2]$ is a nice introduction to topological manifolds. $[4]$ Is the common reference for point set topology. The structure of this paper is as follows. Chapter 2 discusses some backgrounds needed for and used in chapter 3. The third chapter contains the main results. The author has embarked on the goal to write complete proofs of the results discussed in that section.

\section{Background}
In this section, we discuss some of the background necessary for the second section. In pursuit of understanding the second section better, we define a few key notions. We illustrate some of the notions defined with a few examples. The examples in this section are standard and can be found in $[1]$ and $[2]$ along with proofs. Our notation follows as that in $[1]$ and $[3]$

\begin{definition}
A topological manifold of dimension $n$ is a topological space $M$ that is second countable, Hausdorff and locally euclidean of dimension $n$: For each $p\in M$ there exists a pair $(U,\phi)$, where $U$ is an open subset of $M$ containing $p$ and $\phi:U\rightarrow \mathbb{R}^n$ is a homeomorphism. The pair $(U,\phi)$ is called a $\textbf{chart}$ on $M$.  $U$ is called a $\textbf{coordinate domain}$ , and $\phi$ is called a $\textbf{coordinate map}$. The collection of charts is called an $\textbf{atlas}$ on $M$.  Letting $r^1,...,r^n$ denote the standard coordinates on $\mathbb{R}^n$, we have $\phi = (x^1,...,x^n)$ , where $x^i=r^i\circ \phi$. Thus, one calls the  data $(U,x^1,...,x^n)$ or more simply $(x^1,...,x^n)$, a $\textbf{local coordinate system}$ on $M$.

\end{definition}

For a point $p$ in a manifold $M$, one typically identifies $p$ with its image under a coordinate map. To see this in action, let $(U,x^1...,x^n)$ be a chart on $M$ . Then one may blur the distinction between $p$ and $(x^1(p),...,x^n(p))$.  The reason for such a temporary identification is as follows: Given a point $p\in M$, a chart about $p$ identifies, atleast topologically, a neighborhood of $p$ with an open region of $\mathbb{R}^n$.  Thus, we would like the coordinate system on the region surrounding the point to be induced from the standard coordinates on euclidean space. This is indeed the case, under our identification. More precisely, identifying $U$ with its image under the coordinate map, we obtain $x^i=r^i|_{U}$. Note that this identification is not canonical, since obviously different charts may give rise to different coordinates of the point.

\begin{example}
$\mathbb{R}^n$ for each $n$ with the standard topology is a manifold of dimension $n$. For each $n$, $\mathbb{S}^n$ is a compact topological manifold of dimension $n$. $\mathbb{R}^n$ has a global coordinate system. Namely, $(\mathbb{R}^n,r^1,...,r^n)$.  $\mathbb{S}^n$ cannot have a global coordinate system, for otherwise we would have a homeomorphism $\mathbb{R}^n\approx \mathbb{S}^n$.

\end{example}

\begin{definition}
Let $(C_{*},\partial)$ be a chain complex of abelian groups. Then, $\partial^2=0$. We thus define for each n $\textbf{n-th homology group}$ of $C_{*}$: $H_n(C_{*})=\frac{im\partial_n}{ker\partial_{n+1}}$.

\end{definition}

\begin{example}
Let $X$ be a space and $A\subseteq X$ a subspace. Then we have a pair $(X,A)$. For each $n$, $S_n(A)$ is the free abelian group generated by all the singular n simplices in $A$. Since an singular n simplex in $A$ is an n simplex in $X$, $S_n(A)\subseteq S_n(X)$. We have a short exact sequence of chain complexes: $0\rightarrow S_{*}(A)\hookrightarrow S_{*}(X)\rightarrow S_{*}(X,A)\rightarrow 0$.Where we define $S_n(X,A)$ to be the group $S_{n}(X)$ modulo $S_{n}(A)$. We note that we may realize $S_n(X,A)$ as the free abelian group generated by the $n$ simplices in $X$ with image not contained in $A$.

\end{example}

\begin{theorem} (Excision)
Let $X$ be a space and $Z\subseteq A\subseteq X$  with $\overline{Z}\subseteq intA$. Then the inclusion $(X-Z,A-Z)\hookrightarrow (X,A)$ induces an isomorphism $H_n(X-Z,A-Z)\cong H_n(X,A)$  for each $n$.

\end{theorem}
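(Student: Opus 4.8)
The plan is to deduce excision from the more fundamental fact that singular homology can be computed using only ``small'' simplices, so I would first isolate that statement and then specialize it to the cover determined by $A$ and $X-Z$. Given a collection $\mathcal{U}=\{U_\alpha\}$ of subsets of $X$ whose interiors cover $X$, let $S_*^{\mathcal{U}}(X)\subseteq S_*(X)$ denote the subcomplex generated by those singular simplices whose image lies in a single $U_\alpha$. The central claim, call it the small simplices theorem, is that the inclusion $S_*^{\mathcal{U}}(X)\hookrightarrow S_*(X)$ is a chain homotopy equivalence, so that it induces isomorphisms $H_n(S_*^{\mathcal{U}}(X))\cong H_n(X)$ for all $n$. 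Once this is in hand I would take $\mathcal{U}=\{A,\,X-Z\}$; the hypothesis $\overline{Z}\subseteq \mathrm{int}\,A$ guarantees $\mathrm{int}\,A\cup \mathrm{int}(X-Z)=\mathrm{int}\,A\cup(X-\overline{Z})=X$, so $\mathcal{U}$ is an admissible cover.

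Next I would carry out the purely formal reduction to a chain isomorphism. Since $S_*(A)\subseteq S_*^{\mathcal{U}}(X)$, one may form the relative complex $S_*^{\mathcal{U}}(X)/S_*(A)$, and I would verify directly that the composite $S_*(X-Z)\hookrightarrow S_*^{\mathcal{U}}(X)\twoheadrightarrow S_*^{\mathcal{U}}(X)/S_*(A)$ is surjective with kernel $S_*(X-Z)\cap S_*(A)=S_*(A-Z)$. This yields a natural isomorphism of chain complexes $S_*(X-Z)/S_*(A-Z)\cong S_*^{\mathcal{U}}(X)/S_*(A)$, and hence $H_n(X-Z,A-Z)\cong H_n\!\left(S_*^{\mathcal{U}}(X)/S_*(A)\right)$; it then only remains to identify the right-hand side with $H_n(X,A)$.

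For the small simplices theorem itself I would build the barycentric subdivision operator, first on the linear chains of a convex region of Euclidean space via an inductive cone construction, then show that subdivision $S$ is a chain map admitting an explicit chain homotopy $T$ with $\partial T+T\partial=\mathrm{id}-S$, and finally transport both $S$ and $T$ to arbitrary singular chains by naturality (applying them to the universal simplex $\mathrm{id}\colon \Delta^n\to\Delta^n$ and pushing forward). The key quantitative input is that barycentric subdivision of an $n$-simplex shrinks the diameters of its pieces by the factor $n/(n+1)$; combined with the Lebesgue number lemma for the open cover of the compact space $\Delta^n$ pulled back along a singular simplex, this shows that after sufficiently many iterations $S^m$ every simplex becomes $\mathcal{U}$-small. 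Assembling the iterated homotopies, and taking for a given finite chain the maximum number of iterations needed over its finitely many simplices, produces the desired chain homotopy equivalence.

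Finally I would upgrade to the relative setting. Because $S$ and $T$ are natural, they carry $S_*(A)$ into itself and so descend to the quotient $S_*(X)/S_*(A)$; the same iteration argument, now applied to relative cycles, shows that $S_*^{\mathcal{U}}(X)/S_*(A)\hookrightarrow S_*(X)/S_*(A)$ is a chain homotopy equivalence, giving $H_n\!\left(S_*^{\mathcal{U}}(X)/S_*(A)\right)\cong H_n(X,A)$. Chaining this with the isomorphism of the second paragraph completes the proof. I expect the main obstacle to be the construction and bookkeeping of the subdivision operator and its chain homotopy, namely verifying the relation $\partial T+T\partial=\mathrm{id}-S$ together with the diameter estimate, and especially controlling the number of subdivisions uniformly over a whole chain, rather than the formal deductions at either end.
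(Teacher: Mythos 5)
The paper itself offers no proof of this theorem: Excision appears in the Background section as a quoted result, with proofs deferred to the references (Hatcher [1], Rotman [3]). Your proposal is therefore not competing with an argument in the paper; it reconstructs the standard proof from [1], and it does so correctly in outline. The reduction of excision to the small-simplices theorem via the cover $\mathcal{U}=\{A,\,X-Z\}$ (using $\overline{Z}\subseteq \operatorname{int}A$ to see the interiors cover $X$), the algebraic identification $S_*(X-Z)/S_*(A-Z)\cong S_*^{\mathcal{U}}(X)/S_*(A)$ via the second isomorphism theorem, and the barycentric subdivision machinery with the relation $\partial T+T\partial=\mathrm{id}-S$, the $n/(n+1)$ diameter estimate, and the Lebesgue number lemma are exactly the right ingredients. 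One caution about the step you yourself flag as the main obstacle: defining the retraction onto small chains by taking, for each chain, the maximum number of subdivisions over its finitely many simplices does not produce a homomorphism (maxima are not additive), so this device cannot directly furnish a chain map or chain homotopy equivalence. There are two standard repairs. Either carry out the simplex-wise construction, with $m(\sigma)$ the minimal number of subdivisions needed for the individual simplex $\sigma$ and the correction operator $D\sigma=\sum_{0\le i<m(\sigma)}TS^{i}\sigma$, which yields additive, natural operators that preserve $S_*(A)$ and hence descend to the relative quotients; or abandon the chain-level equivalence and argue directly on homology, showing every relative cycle is homologous rel $A$ to a $\mathcal{U}$-small one and every small relative boundary bounds a small chain --- the max-over-a-chain trick is legitimate there because it is applied to one fixed cycle at a time rather than used to define an operator. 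With either repair your argument is complete and agrees with the proof the paper implicitly invokes from its reference [1].
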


\begin{example}
Suppose $X$ is a $T_1$ space and $x\in X$. Then for any open neighborhood $U$ of $x$,

\begin{enumerate}
\item $H_n(X,U)\cong  H_n(X-\{x\},U-\{x\})$.
\item $H_n(X,X-\{x\})\cong H_n(U,U-\{x\})$.
\end{enumerate}

\noindent First one is a direct consequence of excision. Let us establish the second one: Note that $\{x\}\subseteq U$ and thus $X\backslash U \subseteq X\backslash \{x\}$. Therefore, $H_n(X\backslash X\backslash U, X\backslash \{x\} \backslash X\backslash U)=H_n(U,U\backslash \{x\})\cong H_n(X,X\backslash \{x\})$.

\end{example}

\begin{definition}
If $X$ is a space we define the $\textbf{cone } of $ $X$, denoted $CX$, to be the quotient space $\frac{X\times [0,1]}{X\times 1}$. If $X$ and $Y$ are spaces then choosing $x\in X$ and $y\in Y$, we define the $\textbf{wedge sum}$ of $X$ and $Y$ to be the quotient space $\frac{X\coprod Y}{x\sim y}$.

\end{definition}

\begin{example}
If $X$ and $Y$ are homeomorphic spaces, then $CX$ is homeomorphic to $CY$; $CX\approx CY$. The reason for this is the functoriality of the cone map: That is, if $f:X\rightarrow Y$ is continuous, then the $\textbf{cone map}$ is given by $Cf:CX\rightarrow CY$, $(Cf)([x,t]=[f(x),t]$. The cone map respects composition and preserves the identity . The respecting of said properties allows us to deduce that homeomorphic spaces have homeomorphic cones.

\end{example}

\begin{theorem} (Relative Mayer-Vietoris)
Suppose we have a pair of space $(X,Y)=(int A \cup int B, int C\cup intD)$ , where $C\subseteq A$ and $D\subseteq B$. Then, we have the following long exact sequence in homology:\\

\centerline{$...\rightarrow H_n(A\cap B,C\cap D)\rightarrow H_n(A,C)\oplus H_n(B,D)\rightarrow H_n(X,Y)\rightarrow... $}

\end{theorem}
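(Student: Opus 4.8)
The plan is to derive the sequence from the algebraic fact that a short exact sequence of chain complexes induces a long exact sequence in homology (the zig-zag lemma), the real input being a carefully built short exact sequence of \emph{relative} singular chain complexes. First I would record the two absolute Mayer--Vietoris short exact sequences at chain level. Writing $S_{*}(A+B)$ for the subcomplex of $S_{*}(X)$ spanned by singular chains expressible as a sum of a chain supported in $A$ and a chain supported in $B$, there is a short exact sequence
\[
0 \to S_{*}(A\cap B) \xrightarrow{\, x\mapsto (x,-x)\,} S_{*}(A)\oplus S_{*}(B) \xrightarrow{\,(x,y)\mapsto x+y\,} S_{*}(A+B) \to 0,
\]
and an entirely analogous one for the cover $\{C,D\}$ of $Y$ built from $S_{*}(C+D)$. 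Since $C\subseteq A$ and $D\subseteq B$, so that $C\cap D\subseteq A\cap B$ and $S_{*}(C+D)\subseteq S_{*}(A+B)$, the inclusions of chain groups fit into a commutative ladder in which the $Y$-sequence injects into the $X$-sequence.

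Second, I would pass to the vertical quotients of this ladder. Each column is, by construction, the defining short exact sequence of a relative chain complex, so all three columns and the top two rows are exact; the nine lemma then forces the bottom row
\[
0 \to S_{*}(A\cap B, C\cap D) \to S_{*}(A,C)\oplus S_{*}(B,D) \to \frac{S_{*}(A+B)}{S_{*}(C+D)} \to 0
\]
to be exact as well. Feeding this into the zig-zag lemma produces a long exact sequence whose terms in each degree are $H_n(A\cap B, C\cap D)$, then $H_n(A,C)\oplus H_n(B,D)$, and finally the homology of the quotient complex $Q_{*} := S_{*}(A+B)/S_{*}(C+D)$.

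The remaining task is to identify $H_n(Q_{*})$ with $H_n(X,Y)$. Here I would compare the short exact sequence $0\to S_{*}(C+D)\to S_{*}(A+B)\to Q_{*}\to 0$ with the canonical $0\to S_{*}(Y)\to S_{*}(X)\to S_{*}(X,Y)\to 0$ via the evident inclusions. The small-simplices (barycentric subdivision) theorem, which is the engine behind the Excision theorem stated above, shows that $S_{*}(C+D)\hookrightarrow S_{*}(Y)$ and $S_{*}(A+B)\hookrightarrow S_{*}(X)$ induce isomorphisms on homology; the five lemma applied to the two resulting long exact sequences then forces $Q_{*}\to S_{*}(X,Y)$ to induce isomorphisms $H_n(Q_{*})\cong H_n(X,Y)$. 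Substituting this identification yields the asserted sequence.

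I expect the main obstacle to be precisely this last identification, since it is the only step requiring genuine geometric input rather than formal homological algebra: one must know that subdividing a singular chain sufficiently many times makes each simplex small relative to the open cover $\{\,\mathrm{int}\,A,\ \mathrm{int}\,B\,\}$, and that the subdivision operator is chain homotopic to the identity on $S_{*}(X)$ while restricting compatibly to $S_{*}(Y)$. By contrast, the exactness of the quotient ladder and the naturality of the connecting homomorphisms are routine once the diagram is set up correctly.
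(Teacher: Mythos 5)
Your proposal is correct and is precisely the standard argument: the chain-level Mayer--Vietoris short exact sequences for the covers $\{A,B\}$ of $X$ and $\{C,D\}$ of $Y$, the nine lemma to obtain the short exact sequence of relative chain complexes, the zig-zag lemma, and finally the small-simplices (barycentric subdivision) theorem together with the five lemma to identify the homology of the quotient complex $S_{*}(A+B)/S_{*}(C+D)$ with $H_n(X,Y)$. The paper states this theorem as background without giving a proof, deferring to its references (Hatcher), and your argument coincides with the proof found there, so there is nothing to flag beyond noting that the only genuinely geometric input is, as you say, the subdivision step.
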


\section{Main Results}

The proof technique of the next result follows from trying to get control of the situation by using the Relative Mayer Vietoris sequence from algebraic topology. The reason for doing so is to reduce problems about manifolds to problems about local homology groups $H_{*}(X,X\backslash \{x\})$ for $x\in X$.

\begin{theorem}
Suppose $M$ is a manifold of dimension $n\geq 2$. Then, $M\vee M$ is not a topological manifold.

\end{theorem}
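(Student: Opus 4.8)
The plan is to use local homology to distinguish manifold points from non-manifold points. Recall the key fact that for an $n$-manifold $M$ and any point $p \in M$, the local homology group $H_k(M, M \setminus \{p\})$ is isomorphic to $H_k(\mathbb{R}^n, \mathbb{R}^n \setminus \{0\})$, which by Example 2.5 and excision reduces to the local homology of euclidean space at the origin; this is $\mathbb{Z}$ when $k = n$ and $0$ otherwise. So the strategy is to locate the wedge point $w \in M \vee M$, compute $H_k(M \vee M, (M \vee M) \setminus \{w\})$, and show it disagrees with what a manifold is forced to have at every point.

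The key step is the local homology computation at the wedge point. I would set $X = M \vee M$ and examine the pair $(X, X \setminus \{w\})$. Here I would invoke the Relative Mayer--Vietoris sequence (Theorem 2.2) with $A$ and $B$ chosen as open neighborhoods of the two copies of $M$ meeting along $w$, and $C$, $D$ the corresponding punctured pieces. A cleaner route is to note that $X \setminus \{w\}$ is the disjoint union of two copies of $M \setminus \{p\}$ (where $p$ is the basepoint in each factor), since removing the wedge point disconnects the space into its two punctured factors. Then I would relate $H_k(X, X \setminus \{w\})$ to the direct sum of the two local contributions. The cleanest formulation uses that the reduced homology of a wedge splits, together with a long exact sequence argument: the local homology at the wedge point should come out as $H_n(M, M \setminus \{p\}) \oplus H_n(M, M \setminus \{p\}) \cong \mathbb{Z} \oplus \mathbb{Z}$ in degree $n$, rather than $\mathbb{Z}$.

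Once I have $H_n(M \vee M, (M \vee M) \setminus \{w\}) \cong \mathbb{Z} \oplus \mathbb{Z}$, the contradiction is immediate: if $M \vee M$ were a topological manifold, it would have some well-defined dimension $m$, and at every point its local homology would be $\mathbb{Z}$ in degree $m$ and $0$ elsewhere. But at the wedge point we have rank $2$ in degree $n$, which is impossible for a manifold of any dimension. One subtlety worth addressing is why $M \vee M$ would have to have dimension $n$ at all: points away from the wedge point have genuine euclidean neighborhoods of dimension $n$ (inherited from $M$), so if $M \vee M$ were a manifold it must be $n$-dimensional by invariance of dimension, and then the wedge point must also exhibit $H_n = \mathbb{Z}$, contradicting the rank-$2$ computation.

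The main obstacle I anticipate is the local homology computation at the wedge point, specifically justifying the splitting $H_k(X, X \setminus \{w\}) \cong \bigoplus H_k(M, M \setminus \{p\})$ rigorously. The hypothesis $n \geq 2$ should enter precisely here: one needs the basepoint neighborhoods to behave well and the punctured factors to remain connected so that the excision and Mayer--Vietoris bookkeeping yields exactly two independent generators in degree $n$ with no interference from lower degrees. Care is required because the wedge is not locally euclidean at $w$, so I cannot simply quote the manifold local homology formula there; I must build the computation from excision (Example 2.5) and the relative Mayer--Vietoris sequence directly, tracking connecting maps to confirm the rank is genuinely $2$ and not collapsed by some boundary homomorphism.
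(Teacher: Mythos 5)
Your proposal is correct and takes essentially the same approach as the paper: compute the local homology of $M\vee M$ at the wedge point via excision and the relative Mayer--Vietoris sequence, obtain $\mathbb{Z}\oplus\mathbb{Z}$ in degree $n$, and contradict the rank-one local homology forced at every point of an $n$-manifold. If anything, your version is slightly more careful than the paper's: you insist on genuinely open pieces (the paper applies its relative Mayer--Vietoris theorem to the closed copies $M_1$ and $M_2$, whose interiors fail to cover the wedge point), and you justify by invariance of dimension why a manifold structure on $M\vee M$ would have to be $n$-dimensional, a point the paper assumes without comment.
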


\begin{proof}
We first make the claim that if $M$ is an $n$ dimensional manifold and $p_1,...,p_m\in M$.  Then\\

\centerline{$H_n(M,M\backslash \{p_1,...,p_m\})\cong \mathbb{Z}^m$}

\noindent\\Indeed, it suffices by inducting on $m$ to establish the $m=2$ case. To that end, choose open sets $U_1$, $U_2$ about $p_1$ and $p_2$ respectively for which $U_1\cap U_2=\varnothing$. By excision, $H_n(M,M\backslash \{p_1,p_2\}) $ $\cong H_n(U_1\cup U_2, U_1\backslash \{p_1 \}\cup U_2\backslash \{p_2 \})\cong H_n(U_1,U_1-p_1)\oplus H_n(U_2,U_2-p_2)\cong \mathbb{Z}\oplus \mathbb{Z}$. The second to last isomorphism follows from the relative version of Mayer Vietoris. Now to prove the theorem, suppose $M\vee M$ is a topological manifold of dimension $n$. Then,\\

\centerline{ $H_n(M\vee M,M\vee M\backslash \{\star\})\cong \mathbb{Z}$.}
\noindent \\ Let $p_1$ and $p_2$ denote the base points. Now we claim:\\

\centerline{$H_n(M\vee M,(M\vee M)\backslash \{\star\})\cong \mathbb{Z}\oplus \mathbb{Z}$}

\noindent \\ To prove the result above, we use the Relative Mayer Vietoris sequence. Clearly it suffices to show:\\

\centerline{$H_n(M\vee M, (M\vee M)\backslash \{\star\})\cong H_n(M,M\backslash \{p_1\})\oplus H_n(M,M\backslash \{p_1\})$}

\noindent \\ To show the above, we invoke the Relative Mayer-Vietoris sequence, note that $M\vee M=M_1\cup M_2$, where we have homeomorphisms $M_1\approx M$ and $M_2\approx M$ that restrict to  homeomorphisms $M_i\backslash \{\star\} \approx M \backslash \{p_i\}$ for each $i$. Thus consider $(M\vee M, M\vee M\backslash \{\star\})=(M_1\cup M_2, M_1\backslash \{\star\})\cup M_2\backslash\{\star\})$. Hence we have by Relative Mayer-Vietoris\\

\centerline{$...\rightarrow H_n(\star)\rightarrow H_n(M_1,M_1\backslash \{\star\})\oplus H_n(M_2,M_2\backslash \{\star\})\rightarrow H_n(M\vee M, M\vee M\backslash \{\star\})\rightarrow H_{n-1}(\star)\rightarrow...$}

\noindent \\Since  $n\geq 2$, $H_n(\star)=H_{n-1}(\star)=0$ . Hence the long exact sequence above gives the isomorphism we seek.

\end{proof}

The motivation behind the second result is the case when when we are dealing with $\mathbb{S}^1$. We have $C\mathbb{S}^1=D^2$ . The cone of a point $x$, $Cx=I$ . None of these cones admit a manifold structure (without boundary!). Thus, one can naturally ask if $CM$ is ever a manifold when $M$ is a manifold. The intuition of the proof technique is to get the discussion closer to a discussion of local homology groups. This is because, as is known (see [1] and the sixth example in the preceding section), that the dimension of a manifold is characterized by its local homology groups $H_n(X,X\backslash x)$ for each $x\in X$. Thus, to get the closer discussion to local homology groups, one requires to go through a degree of technical details. Here is another motivating example

\begin{theorem} (Motivating) If $M$ is a compact 2-manifold then $CM$ is not a 3-manifold.

\end{theorem}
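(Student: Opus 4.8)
The plan is to prove that the cone $CM$ of a compact $2$-manifold $M$ is not a $3$-manifold by showing that the local homology at the cone point fails to be what a $3$-manifold requires. Recall that if $CM$ were a $3$-manifold, then at every point $x$ we would need $H_k(CM, CM \setminus \{x\}) \cong \mathbb{Z}$ for $k = 3$ and $0$ otherwise. My strategy is to compute this local homology at the cone vertex $v$ (the collapsed copy $M \times \{1\}$) and exhibit a contradiction.

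The key computational step is the excision and deformation-retraction argument that identifies the local homology at the vertex with the reduced homology of $M$ itself, shifted in degree. Concretely, first I would recognize that a neighborhood of $v$ in $CM$ is itself (homeomorphic to) an open cone on $M$, so by excision $H_k(CM, CM \setminus \{v\})$ depends only on this cone neighborhood. Then, since the open cone $CM \setminus (M \times \{0\})$ deformation retracts onto $v$, it is contractible, while the deleted neighborhood $CM \setminus \{v\}$ deformation retracts onto a copy of $M$. Feeding these into the long exact sequence of the pair gives the isomorphism $H_k(CM, CM \setminus \{v\}) \cong \tilde{H}_{k-1}(M)$ for all $k$. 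This is the standard relationship between the local homology of a cone at its vertex and the reduced homology of the base.

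With this reduction in hand, the contradiction is immediate from the classification of compact $2$-manifolds. For $CM$ to be a $3$-manifold at $v$ we would need $H_3(CM, CM \setminus \{v\}) \cong \mathbb{Z}$, hence $\tilde{H}_2(M) \cong \mathbb{Z}$, and we would simultaneously need $H_k(CM, CM \setminus \{v\}) = 0$ for $k \neq 3$, hence $\tilde{H}_{k-1}(M) = 0$ for all $k \neq 3$, forcing $\tilde{H}_0(M) = \tilde{H}_1(M) = 0$. But a compact $2$-manifold $M$ is connected (or one takes a component) so $\tilde{H}_0(M) = 0$ is fine; the genuine obstruction is in degree one: any orientable closed surface of genus $g \geq 1$ has $\tilde{H}_1(M) \cong \mathbb{Z}^{2g} \neq 0$, and the sphere $\mathbb{S}^2$ has $\tilde{H}_2(\mathbb{S}^2) \cong \mathbb{Z}$ but then the vertex would look three-dimensional while ordinary points of $CM$ lying on the base look two-dimensional, which is itself contradictory. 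Non-orientable surfaces give $\tilde{H}_2(M) = 0$, so $H_3(CM, CM \setminus \{v\}) = 0 \neq \mathbb{Z}$ directly.

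The step I expect to be the main obstacle is pinning down the argument uniformly across all compact $2$-manifolds, orientable and non-orientable alike, rather than relying on a single example such as $\mathbb{S}^2$. The cleanest way to close this is to observe that for $CM$ to be a $3$-manifold we would need the local homology $\tilde{H}_{*}(M)$ to be concentrated in a single degree and equal to $\mathbb{Z}$ there; but for any compact $2$-manifold $M$ the reduced homology is spread across degrees $1$ and $2$ in a way that can never match the required profile of a single $\mathbb{Z}$ in degree $2$ with vanishing elsewhere. The care needed is to handle the sphere case, where $\tilde{H}_2 \cong \mathbb{Z}$ superficially matches, by instead comparing the local homology at the vertex to that at a point on the base $M \times \{0\}$ (where a neighborhood is a half-open cylinder, so the point has the local homology of a boundary point and fails the manifold-without-boundary condition), thereby obtaining the contradiction that $CM$ cannot be a manifold regardless of which surface $M$ is.
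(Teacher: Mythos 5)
Your proposal is correct, but it takes a genuinely different route from the paper. The paper's proof is a three-line argument resting on a very big hammer: since $M$ is compact, $CM$ is a compact, contractible (hence simply connected) space, so if it were a closed $3$-manifold the topological Poincar\'e conjecture would force $CM \approx \mathbb{S}^3$, contradicting $H_3(CM)=0$ versus $H_3(\mathbb{S}^3)=\mathbb{Z}$. You instead work with local homology: the identification $H_k(CM, CM\setminus\{v\}) \cong \tilde{H}_{k-1}(M)$ at the vertex, the classification of compact surfaces to rule out every $M$ except $\mathbb{S}^2$, and then, for the sphere (where the vertex genuinely looks fine, as $C\mathbb{S}^2 \approx D^3$), the observation that a point $p$ on the base $M\times\{0\}$ has a neighborhood homeomorphic to $\mathbb{R}^2\times[0,1)$, whose local homology vanishes in every degree --- something no point of a $3$-manifold can satisfy. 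Both arguments are valid; yours trades Perelman's theorem for the far more elementary classification of surfaces plus standard excision and long-exact-sequence computations. Two remarks. First, your closing observation is stronger than you give it credit for: the base-point argument alone kills every case at once, with no classification, no compactness hypothesis, and no restriction to dimension $2$ --- it is essentially the engine behind the paper's subsequent general theorem that $CM$ is never a manifold --- so the vertex computation and the case analysis over surfaces, while correct and instructive, become logically redundant once you have it. Second, one phrase in your third paragraph should be tightened: base points do not ``look two-dimensional''; the precise statement (which your final paragraph does supply) is that they have the local homology of a half-space boundary point, namely zero in all degrees, which is what contradicts the $3$-manifold condition.
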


\begin{proof}
Suppose for a contradiction that $CM$ is a 3 manifold. Since $M$ is compact, so is its cone. Since the cone of any space is contractible, it must be simply connected. Therefore by the topological Poincare Conjecture, $CM\approx \mathbb{S}^3$. However, $H_3(\mathbb{S}^3)=\mathbb{Z}$, whereas since cones are homotopy equivalent to a point, we have $H_3(CM)=0$.

\end{proof}

\noindent Squeezing more out of the proof above, we obtain the following example:

\begin{example}
$C\mathbb{R}P^2$ is not a topological manifold of dimension 3.

\end{example}

\begin{theorem}
If $M$ is a manifold then $CM$ is not a topological manifold.

\end{theorem}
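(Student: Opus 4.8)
The plan is to show that $CM$ fails to be a manifold by exhibiting a point whose local homology is incompatible with the local homology of any point in an $n$-manifold. The natural candidate is the cone point $\star$, the image of $M \times \{1\}$ under the quotient. The key fact I would invoke is the one recalled in the paragraph preceding this theorem: the dimension of a manifold is detected by its local homology groups $H_*(X, X \setminus \{x\})$, and for an $n$-manifold these vanish except in degree $n$, where they equal $\mathbb{Z}$. So the strategy is: compute $H_*(CM, CM \setminus \{\star\})$ directly and show it cannot match $H_*(\mathbb{R}^k, \mathbb{R}^k \setminus \{0\})$ for any $k$.

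\noindent The main computational engine will be the observation that $CM \setminus \{\star\}$ deformation retracts onto the base copy of $M$ sitting at $M \times \{0\}$ (slide every point $[x,t]$ with $t < 1$ down toward $t=0$), while $CM$ itself is contractible. I would feed this into the long exact sequence of the pair $(CM, CM \setminus \{\star\})$. Since $H_*(CM) = H_*(\mathrm{pt})$ and $H_*(CM \setminus \{\star\}) \cong H_*(M)$, the long exact sequence
$$\cdots \rightarrow H_k(CM) \rightarrow H_k(CM, CM \setminus \{\star\}) \rightarrow H_{k-1}(CM \setminus \{\star\}) \rightarrow H_{k-1}(CM) \rightarrow \cdots$$
degenerates, yielding $H_k(CM, CM \setminus \{\star\}) \cong \widetilde{H}_{k-1}(M)$ in reduced homology for all $k$. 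Thus the local homology at the cone point is, up to a degree shift, exactly the reduced homology of the original manifold $M$.

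\noindent The final step is to argue that $\widetilde{H}_{k-1}(M)$ cannot have the shape $(\mathbb{Z}$ in one degree, $0$ elsewhere$)$ required of a manifold point. Here I would use that a closed (or even just nonempty) manifold $M$ of dimension $d$ has nontrivial homology in at least two degrees: $H_0(M) \neq 0$ always, and if $M$ is connected and $d \geq 1$ then $\widetilde{H}_0(M) = 0$ but $H_d(M)$ is $\mathbb{Z}$ or $0$ depending on orientability, so one must track both the bottom and top of the homology. The cleanest contradiction comes from the degree-one shift: $H_1(CM, CM\setminus\{\star\}) \cong \widetilde{H}_0(M)$, which is nonzero precisely when $M$ is disconnected, and $H_k(CM, CM \setminus\{\star\}) \cong \widetilde H_{k-1}(M)$ forces the local homology to be concentrated in a single degree only if $M$ is a homology sphere, which a generic manifold is not.

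\noindent The hard part will be handling the degenerate low-dimensional and homology-sphere cases uniformly, since for $M = \mathbb{S}^{d}$ the reduced homology \emph{is} concentrated in a single degree and the local homology computation alone does not immediately obstruct a manifold structure; there one must separately observe that the concentration lands in the \emph{wrong} degree or invoke that $CM = C\mathbb{S}^d$ is a disk $D^{d+1}$, a manifold \emph{with boundary} whose boundary points are not locally Euclidean without boundary. I would therefore split into cases: if $M$ is not a homology sphere, the local homology at $\star$ is supported in at least two degrees and no point of any manifold has that property; if $M$ is a homology sphere, I would instead analyze a \emph{non-cone} point or appeal directly to the boundary obstruction, which is where the argument requires the most care.
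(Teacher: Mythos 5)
Your main computation is correct and is genuinely different from the paper's: the long exact sequence of the pair $(CM, CM\setminus\{\star\})$ does give $H_k(CM,CM\setminus\{\star\})\cong \widetilde{H}_{k-1}(M)$, and (after pinning the dimension of $CM$ to $n+1$, which follows because $CM$ contains the open $(n+1)$-manifold $M\times(0,1)$) this yields a contradiction whenever $\widetilde{H}_*(M)$ is not that of $\mathbb{S}^n$. But the homology-sphere case you flag is a genuine gap, and two of the three repairs you sketch cannot close it. First, the concentration does \emph{not} land in the ``wrong degree'': for a homology $n$-sphere, $\widetilde{H}_{k-1}(M)=\mathbb{Z}$ exactly at $k=n+1$, which is precisely the local homology of an interior point of an $(n+1)$-manifold --- as it must be, since for $M=\mathbb{S}^n$ the cone point is the center of $D^{n+1}$, a perfectly good interior point. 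Second, the identification $CM\approx D^{n+1}$ is available only when $M$ is actually a sphere; for a non-simply-connected homology sphere such as the Poincar\'e dodecahedral space $P$, that identification is false (and even for simply connected homology spheres it requires the topological Poincar\'e conjecture). Worse, for $CP$ the cone point has exactly the local homology of a point in $\mathbb{R}^4$, yet $CP$ is not a manifold: detecting this at the cone point requires a fundamental-group argument (small deleted neighborhoods of $\star$ surject onto $\pi_1(P)\neq 0$, whereas in a $4$-manifold they would be simply connected), not homology. So no homological computation at $\star$ can finish your case split.

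The workable repair is your third suggestion --- analyze a non-cone point --- and that is exactly what the paper does, for \emph{all} $M$ at once, with no case division and without needing the cone-point computation at all. Take $p=(x,0)$ in the base $M\times\{0\}\subseteq CM\setminus\{\star\}=M\times[0,1)$. By excision (pass to a chart $U\approx\mathbb{R}^n$ around $x$ in $M$), the local homology of $CM$ at $p$ is that of the half-space $\mathbb{R}^n\times[0,1)$ at a boundary point, and this vanishes in every degree because both the half-space and the half-space minus a boundary point are contractible. That contradicts the requirement $H_{n+1}(CM,CM\setminus\{p\})\cong\mathbb{Z}$ for a point of an $(n+1)$-manifold, with no hypothesis on $M$. (One caution if you adopt this route: the paper's own Claim 1 asserts instead that this local homology equals $H_*(M,M\setminus\{x\})$, concentrated in degree $n$, via the retraction $r(x,s)=(x,0)$; that argument is flawed, since $r$ sends the points $(x,s)$, $s>0$, to $p$ itself and hence does not map $(M\times[0,1))\setminus\{p\}$ into $(M\times 0)\setminus\{p\}$. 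The correct value is $0$ in all degrees, but the contradiction in degree $n+1$ stands either way.) In short: your approach is correct and more informative where it applies, but it provably cannot cover homology spheres, and the uniform base-point argument is both the fix and the simpler proof.
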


\begin{proof}
We recall that $CM$ is obtained by forming a closed cylinder of finite height, via attaching a copy of $M$ to each $t$, where $t\in [0,1]$ then identifying the points of the in the copy of $M$ at $t=1$; obtaining a "vertex" point. Let $\star$ denote the vertex point. If $CM$ is a manifold, then it must be a manifold of dimension $n+1$. Hence, $CM\backslash \{\star\}$ is also a manifold of dimension $n+1$. Now note that $CM\backslash \{ \star\}  = M\times [0,1)$. Since $0$ is a deformation retract of  $[0,1)$,  $\tilde{H}_m(CM\backslash \{\star\})\cong\tilde{H}_m(M)$ for each $m$.  Now  note that if $p\in M\times 0$ , then $p=(x,0)$. Thus, \\

\centerline{$\tilde{H}_m(CM\backslash \{\star\},CM\backslash \{ \star, p\}) \cong \tilde{H}_m(M\times [0,1), (M\times [0,1))\backslash \{p\})$ }

\noindent \\ To establish the result we seek, it suffices to establish the following two claims.

\noindent \\ \textit{Claim 1:} $\tilde{H}_m(M\times [0,1), (M\times [0,1))\backslash \{p\})\cong \tilde{H_m}(M\times 0 , (M\times 0)\backslash \{p\})$\\

To prove the claim it suffices to show that the pair  $(M\times [0,1),(M\times [0,1)\backslash \{p\}))$ is homotopy equivalent to $(M\times 0 , (M\times 0)\backslash \{p\})$. To see this, first note that $0$ is a deformation retract of $[0,1)$ via the map\\

\centerline{$H:[0,1)\times I \rightarrow [0,1)$, given by $H(s,t)=(1-t)s$}

\noindent \\Now $H$ induces a continuous map $H^{M}: (M\times [0,1))\times I\rightarrow M\times [0,1)$ given by:\\

\centerline{ $H^{M}((m,s),t)=(m,H(s,t))$}

\noindent \\ It is clear that $H^{M}$ deformation retracts $M\times [0,1)$ onto  $M\times 0$.  Now, the retraction associated with the homotopy above is the map\\

\centerline{$r:M\times [0,1)\rightarrow  M\times 0$ , given by $r(x,s)=(x,0)$}

\noindent \\ Thus, $r((M\times [0,1)) \backslash \{p\})=(M\times 0)\backslash \{x\}$ . As a consequence, we get that $H$ induces a homotopy equivalence $(M\times [0,1),M \times [0,1)\backslash \{p\}) \simeq (M\times 0, (M\times 0)\backslash \{p\})$. \\

\noindent \textit{Claim 2:} $\tilde{H_m}(M\times 0 , (M\times 0)\backslash \{p\})\cong \tilde{H}_m(M,M\backslash \{x\})$ 

 \noindent \\ This follows because we have a canonical homeomorphism $M\rightarrow M\times \{0\}$ which induces a homeomorphism of pairs $(M, M\backslash \{x\})\approx (M\times 0, (M\times 0)\backslash \{p\})$.

\noindent \\ However, since the top local homology groups must differ for dimensional reasons, we obtain a contradiction.

\end{proof}

\section{Conclusion}

In this paper, we first provided a proof of showing that wedging a manifold with itself (of dimension atleast 2) is not a manifold. Then, we have written a proof to show that the cone of a manifold is not a topological manifold The result can be extended in various directions. For instance, $C\mathbb{S}^1$ is a topological manifold with boundary. Thus, one can investigate the conditions conditions for which the cone of a manifold (as a manifold with boundary) admits a smooth structure. Moreover, one can seek to find an analogous result to the results presented in this paper using cohomology.

\section{References}

\indent [1] Pi.math.cornell.edu. 2022. Allen Hatcher's Homepage. [online] Available at: <https://pi.math.cornell.edu/~hatcher/>

\noindent  [2]  Lee, J., 2011. Introduction to topological manifolds. New York: Springer.

\noindent  [3] Rotman, J., 2013. An Introduction to Algebraic Topology. New York, NY: Springer.

\noindent  [4] Munkres, J., 2013. Topology. Harlow: Pearson.

\end{document}